\documentclass[12pt,a4paper]{article}
\usepackage{amssymb, graphicx}
\usepackage{latexsym}
\usepackage{amsmath}
\usepackage{amssymb}   
\usepackage{amsthm}
\usepackage{amscd}

\textwidth=135mm

\pagestyle{plain}

\setlength{\textheight}{670pt}
\setlength{\textwidth}{420pt}
\setlength{\headheight}{0pt}
\setlength{\topmargin}{-30pt}
\setlength{\oddsidemargin}{0pt}
\setlength{\evensidemargin}{0pt}

\numberwithin{equation}{section}
\newtheorem{theorem}{Theorem}[section]

\newtheorem{definition}[theorem]{Definition}

\newtheorem{lemma}[theorem]{Lemma}


\begin{document}

\title{Some modular inequalities in Lebesgue spaces with variable exponent}

\author{Mitsuo Izuki, Takahiro Noi and Yoshihiro Sawano}

\maketitle

\begin{abstract}
Our aim is to study the modular inequalities
for some operators, for example 
the Bergman projection acting on, 
in Lebesgue spaces with variable exponent. 
Under proper assumptions on the variable exponent, 
we prove that the modular inequalities 
are hold if and only if the exponent almost everywhere equals to a constant.
In order to get the main results, we prove a lemma for a lower pointwise 
bound for these operators of a characteristic function.
\\
{\bf Key words:} variable exponent, modular inequality, Lebesgue space
\\
{\bf AMS Subject Classification:} 42B35.
\end{abstract}


\section{Introduction}

The study on variable exponent analysis
has been rapidly developed 
after the work 
\cite{KR} 
where Kov$\acute{\rm{a}}\check{\rm{c}}$ik and R$\acute{\rm{a}}$kosn\'ik
have established fundamental properties 
of variable Lebesgue spaces
(see also \cite{CF-book,INS2014,Sawano-text-2018}). 
In particular the theory of variable function spaces
in connection with
the boundedness of the Hardy--Littlewood 
maximal operator $M$
has been deeply studied. 
Cruz-Uribe, Fiorenza and Neugebauer \cite{CFN2003,CFN2004}
and Diening \cite{DieningMIA2004}
have independently obtained 
the log-H\"{o}lder continuous conditions 
that guarantee 
the boundedness of $M$ on variable Lebesgue spaces.
We also note that
the recent development 
of variable exponent analysis 
has 
the extrapolation theorem
from weighted inequalities to 
norm inequalities on variable Lebesgue spaces
(\cite{CFMP,CMP-book}).

In general, the boundedness of $M$ on the
variable Lebesgue space $L^{p(\cdot)}(\mathbb{R}^n)$
describes that the norm inequality 
\begin{equation}
\| Mf \|_{L^{p(\cdot)}(\mathbb{R}^n)}
\le C\, \| f \|_{L^{p(\cdot)}(\mathbb{R}^n)}
\label{intro-norm}
\end{equation}
holds for all $f\in L^{p(\cdot)}(\mathbb{R}^n)$, 
where $C$ is a positive constant independent of $f$.
Lerner \cite{Lerner}
has pointed out the crucial difference 
between the norm inequality (\ref{intro-norm})
and the following modular inequality
\begin{equation}
\int_{\mathbb{R}^n} Mf(x)^{p(x)}\, dx
\le
C\int_{\mathbb{R}^n} |f(x)|^{p(x)}dx.
\label{intro-modular}
\end{equation}
More precisely,
Lerner has proved that 
$p(\cdot)$ must be a constant function
whenever 
$\displaystyle 1 < \mathop{\mathrm{ess \,inf}}_{x\in \mathbb{R}^n}p(x)
\le \mathop{\mathrm{ess \, sup}}_{x\in \mathbb{R}^n}p(x)<\infty$ and
the modular inequality (\ref{intro-modular})
holds.
Izuki \cite{IzukiGMJ} has considered the 
difference for some operators arising from the wavelet theory.
Izuki, Nakai and Sawano \cite{INS2013,INS2014}
have given an alternative proof of Lerner's result.
They have also studied the problem in the weighted case (\cite{INS2015}).

Recently, Izuki, Koyama, Noi and Sawano \cite{IKNS2019} have considered some modular inequalities for some operators. 
In this paper, we focus on three operators below. 
First, we investigate the Bergman projection operator on the unit disc ${\mathbb D}$ in the complex plane. 
The generalization 
of holomorphic function spaces 
in terms of variable exponent 
and the boundedness of Bergman projection operators
on variable exponent spaces 
have been studied (\cite{ChaconRafeiro2014,CR2016,CRV2017,KS2016,KS2017}).
Among them we focus on the work \cite{ChaconRafeiro2014}
due to Chac\'on and Rafeiro.
They defined 
Bergman spaces 
$A^{p(\cdot)}(\mathbb{D})$
with variable exponent $p(\cdot)$
on the open unit disk $\mathbb{D}$.
Applying the local log-H\"older continuous condition and 
the extrapolation theorem, 
they proved 
the density of the set of polynomials in $A^{p(\cdot)}(\mathbb{D})$
and the boundedness of the 
Bergman projection $P\, : \, L^{p(\cdot)}(\mathbb{D})\to A^{p(\cdot)}(\mathbb{D})$
and the $\alpha$-Berezin transform 
$B_{\alpha}\, : \, L^{p(\cdot)}(\mathbb{D})\to L^{p(\cdot)}(\mathbb{D})$.
In particular, 
Chac\'on and Rafeiro 
\cite{ChaconRafeiro2014} have obtained the norm inequality
\begin{equation}
\| Pf \|_{L^{p(\cdot)}(\mathbb{D})}
\le C\, \| f \|_{L^{p(\cdot)}(\mathbb{D})}
\label{intro-norm-analytic}
\end{equation}
for all $f\in L^{p(\cdot)}(\mathbb{D})$.

Second our target operator is 
\[
B_{{\mathbb R}^2_+}f(z) = \frac{-1}{\pi} \int_{{\mathbb R}_+^2}\frac{f(w)}{(z-\overline{w})^2} dA(w), \ \ 
z=x+iy\in{\mathbb R}_+^2, 
\] 
where 
$d A(w)$ denotes the Lebesgue measure and
${\mathbb R}_+^2$ is the upper half-space over 
$\mathbb{R}^2_+\simeq {\mathbb C}$. 
Karapetyants and Samko \cite{KS2017} proved that $B_{{\mathbb R}_+^2}$ is a projection from $L^{p(\cdot)} ({\mathbb R}_+^2)$ 
onto ${\mathcal A}^{p(\cdot)}({\mathbb R}_+^2)$ 
if $p(\cdot)\in{\mathcal P}({\mathbb R}_+^2)$ satisfies 
the log-H\"{o}lder condition and the log-decay condition (\cite[Theorem 3.1 (1)]{KS2017}). 
So they have obtained the norm inequality 
\begin{equation} 
\| B_{{\mathbb R}_+^2} f\|_{L^{p(\cdot)}({\mathbb R}_+^2)} 
\le C 
\| f\|_{L^{p(\cdot)}({\mathbb R}_+^2)} 
\label{eq 20191104-1}
\end{equation}
for all $f\in L^{p(\cdot)}({\mathbb R}_+^2)$.

Finally,
we consider $b_{{\mathbb R}_+^n}$,
the harmonic projection in ${\mathbb R}^n_+$. 
Let ${\mathbb R}^n_+$ stand for the upper half-space
over ${\mathbb R}^n$ with $n \ge 2$.
For
$x=(x_1,x_2,\ldots,x_n)$,
we write
$x'=(x_1,x_2,\ldots,x_{n-1})$
and
$\bar{x}=(x',-x_n)$.
As usual, $h^p({\mathbb R}^n_+)$ stands
for the harmonic Bergman space of harmonic functions
that belong to $L^p({\mathbb R}^n_+)$.
Here and below $dA(x)$ denotes the Lebesgue measure.
The corresponding Bergman projection
$b_{{\mathbb R}^n_+}$ defined by
\begin{align*}
b_{{\mathbb R}^n_+}f(x)
&=
\int_{{\mathbb R}^n_+}R(x,y)f(y)dA(y)  \\
&=
\frac{2}{\pi^{\frac{n}{2}}}
\Gamma\left(\frac{n}{2}\right)
\int_{{\mathbb R}^n_+}
\frac{n(x_n+y_n)-|x-\bar{y}|^2}{|x-\bar{y}|^{n+2}}f(y)dA(y) ,
\end{align*}
is bounded from $L^p({\mathbb R}^n_+)$ onto
$h^p({\mathbb R}^n_+)$
(\cite{Zhu-book}).
Namely $b_{{\mathbb R}_+^n}f \in h^p({\mathbb R}_+^n)$
and the norm inequality 
\begin{equation} 
\| b_{{\mathbb R}_+^n}f \|_{L^p({\mathbb R}_+^n)} 
\le C \| f\|_{L^p({\mathbb R}^n)}
\label{eq 20191104-2}
\end{equation}
hold for all $f\in L^{p} ({\mathbb R}_+^n)$. 
Karapetyants and Samko have extended  \eqref{eq 20191104-2} in the variable exponent settings 
(\cite[Theorem 5.1 ]{KS2017}).

In the present paper, we consider the modular inequalities 
corresponding to the norm inequalities 
\eqref{intro-norm-analytic}, \eqref{eq 20191104-1} and \eqref{eq 20191104-2}. 
More precisely,
for example, if 
$p(\cdot)$ satisfies 
\[
1 < \mathop{\mathrm{ess \, sup}}_{z\in \mathbb{D}}p(z)
\le \mathop{\mathrm{ess \, sup}}_{z\in \mathbb{D}}p(z)<\infty
\]
 and the modular inequality
\[
\int_{\mathbb{D}} |Pf(z)|^{p(z)}dA(z)
\le C \int_{\mathbb{D}} |f(z)|^{p(z)}dA(z)
\]
holds for all $f\in L^{p(\cdot)}(\mathbb{D})$, 
then the variable exponent $p(\cdot)$ must be a constant function. 
We can prove similar results for $B_{{\mathbb R}_+^n}$ and $b_{{\mathbb R}_+^n}$. 
In order to prove them 
we need a lower bound for the 
image of the characteristic function
of a certain set.
We will show a key lemma
for the lower bound before the statement of the main results.

In the present paper we will use the following notation.
\begin{enumerate}
\item
Given a measurable set $E$, we denote the Lebesgue measure of $E$ by $|E|$.
We define the characteristic function of $E$ by $\chi_E$. 

\item
A symbol $C$ always stands for a positive constant independent of the main parameters. 
\end{enumerate}

\section{Function spaces with variable exponent}

Let $\mathbb{D}$ be the open unit disk in the complex plane $\mathbb{C}$, that is, 
$$\mathbb{D}:= \{ z\in \mathbb{C} \, : \, |z|<1 \} .$$
Let also ${\mathbb R}^n_+$ be the upper half plane, that is,
\[
\mathbb{R}^n_+:=\{x=(x',x_n) \in {\mathbb R}^n\,:\,x' \in {\mathbb R}^{n-1}, x_n>0\}.
\]
In the present paper we concentrate on the 
theory on function spaces defined on $\mathbb{D}$
or ${\mathbb R}^n_+$ with $n \ge 2$.
We first define some fundamental notation on variable exponents.
Let $X$ denote either ${\mathbb D}$ or ${\mathbb R}_+^n$.

\begin{definition}
\
\begin{enumerate}
\item Given a measurable function $p(\cdot) : X \to [1,\infty)$, 
we define
\[
p_+:= \mathop{\mathrm{ess \, sup}}_{z\in X}p(z), 
\quad
p_-:= \mathop{\mathrm{ess \, inf}}_{z\in X}p(z).
\]
\item
The set $\mathcal{P}(X)$ consists of all measurable functions
$p(\cdot) : X \to [1,\infty)$ satisfying $1<p_-$ and $p_+<\infty$.
\end{enumerate}
\end{definition}

Chac\'on and Rafeiro \cite{ChaconRafeiro2014}
defined generalized 
Lebesgue spaces and
Bergman spaces on $\mathbb{D}$
with variable exponent.

\begin{definition}
Let $dA(z)$ 
be the normalized Lebesgue measure on $X$ and $p(\cdot)\in \mathcal{P}(X)$.
The Lebesgue space $L^{p(\cdot)}(X)$ consists of all measurable functions $f$ on $X$ 
satisfying that the modular 
\[
\rho_p(f):= \int_{X} |f(z)|^{p(z)}\, dA(z)
\]
is finite. 
The Bergman space $A^{p(\cdot)}(\mathbb{D})$ is the set of all
holomorphic functions $f$ on $\mathbb{D}$ such that $f\in L^{p(\cdot)}(\mathbb{D})$. 
\end{definition}

We note that $L^{p(\cdot)}(X)$ is a Banach space equipped with the norm
\[
\| f \|_{L^{p(\cdot)}(X)}:=
\inf \left\{ \lambda >0 \, : \, \rho_p(f/\lambda) \le 1 \right\} .
\]

The projection $P:L^2(\mathbb{D}) \to A^2(\mathbb{D})$ is called the Bergman projection and given by
\[
Pf(z)= \int_{\mathbb{D}} \frac{f(w)}{(1-\bar{w}z)^2} dA(w) .
\]
It is known that $P : L^p(\mathbb{D}) \to A^p(\mathbb{D})$
is bounded in the case that $1<p<\infty$ is a constant exponent
(\cite{HKZ-book,Zhu-book}).
See also \cite{SSbook} for the case of $p=2$.

Chac\'on and Rafeiro \cite[Theorem 4.4]{ChaconRafeiro2014} 
proved the following boundedness:
\begin{theorem}
Suppose that $p(\cdot) \in \mathcal{P}(\mathbb{D})$ satisfies the local
log-H\"older continuous condition
\[
|p(z_1)-p(z_2)| \le \frac{C}{\log (\mathrm{e}+1/|z_1-z_2|)}
\quad (z_1, \, z_2\in \mathbb{D}) .
\] 
Then the Bergman projection
$P$ is bounded from $L^{p(\cdot)}(\mathbb{D})$ to $A^{p(\cdot)}(\mathbb{D})$, 
in particular, the norm inequality
\[
\| Pf \|_{L^{p(\cdot)}(\mathbb{D})} \le C \, \| f \|_{L^{p(\cdot)}(\mathbb{D})} 
\] 
holds for all $f\in L^{p(\cdot)}(\mathbb{D})$.
\end{theorem}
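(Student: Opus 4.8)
The plan is to deduce the variable-exponent inequality from classical weighted estimates by means of the extrapolation theorem mentioned in the introduction, the weights being the Bekoll\'e--Bonami weights adapted to the Bergman kernel. For an arc $I \subseteq \partial \mathbb{D}$ let $S_I = \{ r e^{i\theta} : 1-|I| \le r < 1,\ e^{i\theta} \in I \}$ be the associated Carleson box, and for $1 < q < \infty$ let $B_q(\mathbb{D})$ denote the class of weights $w$ with
\[
\sup_{I} \left( \frac{1}{|S_I|} \int_{S_I} w\, dA \right) \left( \frac{1}{|S_I|} \int_{S_I} w^{-1/(q-1)}\, dA \right)^{q-1} < \infty .
\]
I would first pass to the \emph{positive} Bergman operator
\[
P^+ f(z) := \int_{\mathbb{D}} \frac{|f(w)|}{|1 - \bar w z|^2}\, dA(w),
\]
which satisfies $|Pf(z)| \le P^+ f(z)$ pointwise. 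By the Bekoll\'e--Bonami theorem, for each fixed $q \in (1,\infty)$ and each $w \in B_q(\mathbb{D})$ one has $\| P^+ f \|_{L^q(w\, dA)} \le C \| f \|_{L^q(w\, dA)}$, the constant depending only on $q$ and on the $B_q$ constant of $w$. This is the weighted input.

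Second, I would feed this into the variable-exponent extrapolation theorem, applied to the family of pairs $(\,|f|,\, P^+ f\,)$. Since the Bergman kernel is not a Calder\'on--Zygmund kernel, the relevant weight class is $B_q(\mathbb{D})$ rather than the Muckenhoupt class $A_q$, so one must use the form of extrapolation in which the Rubio de Francia iteration is run with the Carleson-box maximal operator
\[
M_{\mathbb{D}} g(z) := \sup_{I :\, z \in S_I} \frac{1}{|S_I|} \int_{S_I} |g|\, dA
\]
in place of the Hardy--Littlewood maximal operator. This transfers the fixed-exponent weighted bound into $\| P^+ f \|_{L^{p(\cdot)}(\mathbb{D})} \le C \| f \|_{L^{p(\cdot)}(\mathbb{D})}$, provided $M_{\mathbb{D}}$ is bounded both on $L^{p(\cdot)}(\mathbb{D})$ and on the associate space $L^{(p(\cdot)/q)'}(\mathbb{D})$. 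Because $p(\cdot) \in \mathcal{P}(\mathbb{D})$ gives $1 < p_- \le p_+ < \infty$, I may fix $q \in (1, p_-)$; then $p(\cdot)/q$ has lower bound $p_-/q > 1$, so $(p(\cdot)/q)'$ is a bounded exponent taking values in a compact subinterval of $(1,\infty)$ and the duality is admissible.

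Third --- and here the hypothesis enters --- I would verify the boundedness of $M_{\mathbb{D}}$. Since $\mathbb{D}$ is bounded, no decay condition at infinity is needed and the local log-H\"older continuity suffices. A Carleson box $S_I$ has comparable radial and angular dimensions, both of order $|I|$, so it is contained in a Euclidean ball of radius comparable to $|I|$ and the two have comparable areas; hence $M_{\mathbb{D}} g \le C\, M(g\chi_{\mathbb{D}})$ on $\mathbb{D}$, where $M$ is the Hardy--Littlewood maximal operator on $\mathbb{R}^2$. Extending $p(\cdot)$ to a log-H\"older exponent on $\mathbb{R}^2$ with the same bounds (constant outside a large ball), the results of Diening and of Cruz-Uribe--Fiorenza--Neugebauer give the boundedness of $M$, hence of $M_{\mathbb{D}}$, on $L^{p(\cdot)}(\mathbb{D})$; the same argument applied to the log-H\"older exponent $(p(\cdot)/q)'$ supplies the associate-space bound required above. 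Combining the three steps gives $\| Pf \|_{L^{p(\cdot)}(\mathbb{D})} \le \| P^+ f \|_{L^{p(\cdot)}(\mathbb{D})} \le C \| f \|_{L^{p(\cdot)}(\mathbb{D})}$, and since $Pf$ is holomorphic it lies in $A^{p(\cdot)}(\mathbb{D})$.

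I expect the main obstacle to be the maximal-operator step: one must confirm that the Carleson-box geometry underlying the Bekoll\'e--Bonami weights is compatible with the variable-exponent maximal function theory, so that $M_{\mathbb{D}}$ inherits its boundedness from the log-H\"older condition. The comparison with the Euclidean maximal operator has to be controlled uniformly as $I$ shrinks and the boxes accumulate at $\partial \mathbb{D}$, and the extrapolation theorem itself must be invoked in its $B_q$-adapted form rather than the textbook $A_q$ version.
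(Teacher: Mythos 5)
Your proposal is correct and follows essentially the intended route: the paper does not prove this theorem itself but quotes it from Chac\'on and Rafeiro \cite[Theorem 4.4]{ChaconRafeiro2014}, whose proof proceeds exactly as you describe --- Bekoll\'e--Bonami weighted $L^q$ bounds for the positive Bergman operator, fed into a Rubio de Francia extrapolation adapted to the disc via the Carleson-box maximal operator, with the local log-H\"older condition (and boundedness of $\mathbb{D}$, so no decay condition) supplying the maximal-operator bounds on $L^{p(\cdot)}(\mathbb{D})$ and the associate space. Your handling of the delicate points (the $B_q$ rather than $A_q$ extrapolation, the uniform comparability of Carleson boxes to Euclidean balls near $\partial\mathbb{D}$, and the choice $q \in (1,p_-)$ so that $(p(\cdot)/q)'$ is an admissible log-H\"older exponent) is sound, so there is nothing to flag.
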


In the following sections, we consider the modular inequalities 
corresponding to the norm inequalities 
\eqref{intro-norm-analytic}, \eqref{eq 20191104-1} and \eqref{eq 20191104-2}.

\section{Bergman projection on ${\mathbb D}$}

\begin{theorem}
\label{Theorem-main1}
Let $p(\cdot) \in \mathcal{P}(\mathbb{D})$.
If the modular inequality 
\begin{equation}
\int_{\mathbb{D}} |Pf(z)|^{p(z)}dA(z)
\le C \int_{\mathbb{D}} |f(z)|^{p(z)}dA(z)
\label{eq 180908}
\end{equation}
holds for all $f\in L^{p(\cdot)}(\mathbb{D})$, 
then $p(z)$ equals to a constant for almost every $z\in \mathbb{D}$.
\end{theorem}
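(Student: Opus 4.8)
The plan is to argue by contradiction, exploiting that, unlike the norm inequality \eqref{intro-norm-analytic}, the modular inequality \eqref{eq 180908} is \emph{not} homogeneous: testing it against $t\chi_E$ with a scalar $t>0$ produces an inequality whose two sides scale with different powers of $t$ as soon as $p$ fails to be constant. This is the same mechanism Lerner and Izuki--Nakai--Sawano used for the maximal operator $M$. The engine of the proof is a pointwise lower bound of the following shape: for every compact $K\subseteq\mathbb{D}$ there exist $c_K>0$ and $r_K>0$ such that, whenever $E$ is a measurable set contained in a disk $D(w_0,r)\subseteq K$ with $r\le r_K$, one has $|P\chi_E(z)|\ge c_K|E|$ for all $z\in K$. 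This is the ``key lemma'' announced in the introduction.

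To prove this lemma I would write $P\chi_E(z)=\int_E (1-\overline{w}z)^{-2}\,dA(w)$ and freeze the kernel at the centre $w_0$:
\[
P\chi_E(z)=\frac{|E|}{(1-\overline{w_0}z)^{2}}+\int_E\left(\frac{1}{(1-\overline{w}z)^{2}}-\frac{1}{(1-\overline{w_0}z)^{2}}\right)dA(w).
\]
On $K\times K$ the kernel $(1-\overline{w}z)^{-2}$ is bounded below in modulus (since $|1-\overline{w}z|\le 1+\rho^2<2$, where $\rho:=\max_{z\in K}|z|<1$) and is Lipschitz in $w$, so the first term has modulus at least $(1+\rho^2)^{-2}|E|$ while the error term is at most $C_K r\,|E|$. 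Choosing $r_K$ so small that $C_K r_K\le\frac12(1+\rho^2)^{-2}$ yields the claim with $c_K=\frac12(1+\rho^2)^{-2}$; note that it is the smallness of the \emph{ball}, not of $E$, that defeats the cancellation inherent in the complex-valued kernel.

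With the lemma in hand, suppose $p(\cdot)$ is not a.e.\ constant. Then there is a value $c_0$ with $|\{p<c_0\}|>0$ and $|\{p>c_0\}|>0$, hence one can fix $q_1<q_2$ with $|\{p\le q_1\}|>0$ and $|\{p\ge q_2\}|>0$. Pick a Lebesgue density point $w_0$ of $\{p\le q_1\}$ and a small radius $r\le r_K$, and set $E_1:=\{p\le q_1\}\cap D(w_0,r)$, which has positive measure and lies in a small disk inside a compact $K\subseteq\mathbb{D}$; choose also any $E_2\subseteq\{p\ge q_2\}$ of positive finite measure with $E_2\subseteq K$ (these level sets are disjoint since $q_1<q_2$). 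Now test \eqref{eq 180908} with $f=t\chi_{E_1}$ for $t\ge 1$. Since $p\le q_1$ on $E_1$, the right-hand side is $C\int_{E_1}t^{p(z)}\,dA(z)\le C\,t^{q_1}|E_1|$. For the left-hand side, restrict the integral to $E_2$ and apply the lemma: once $t$ is large enough that $tc_K|E_1|\ge1$, using $p\ge q_2$ on $E_2$ we obtain
\[
\int_{\mathbb{D}}|tP\chi_{E_1}(z)|^{p(z)}\,dA(z)\ge\int_{E_2}\bigl(t\,c_K|E_1|\bigr)^{p(z)}\,dA(z)\ge t^{q_2}\bigl(c_K|E_1|\bigr)^{q_2}|E_2|.
\]
Combining the two bounds gives $t^{q_2-q_1}\le C\,(c_K|E_1|)^{-q_2}|E_2|^{-1}|E_1|$, a constant independent of $t$; since $q_2>q_1$, letting $t\to\infty$ is the desired contradiction, so $p$ is constant a.e.

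The main obstacle is the lemma, and specifically the cancellation: because the Bergman kernel $(1-\overline{w}z)^{-2}$ is complex-valued, one cannot bound $|P\chi_E(z)|$ from below by integrating the modulus of the integrand. Localizing $E$ inside a small disk, so that the kernel is essentially constant there, is exactly what circumvents this; everything else (the density-point localization and the scaling limit $t\to\infty$) is routine. I would also remark that the identical scheme should apply to $B_{\mathbb{R}^2_+}$ and $b_{\mathbb{R}^n_+}$, the only change being the verification of the corresponding lower bound for their kernels.
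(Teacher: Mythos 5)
Your proof is correct, and while it runs on the same engine as the paper's --- a pointwise lower bound for $P\chi_E$ fed into the inhomogeneous scaling test $f=t\chi_{E_1}$ --- your key lemma and the endgame are genuinely different. The paper's Lemma \ref{Lemma-bergman-estimate} is softer: since the kernel at $z=w=\tau$ equals $(1-|\tau|^2)^{-2}>0$, continuity alone gives $c_\tau:=\inf_{z,w\in K_\tau}\mathrm{Re}\bigl((1-\overline{w}z)^{-2}\bigr)>0$ on a small compact neighborhood $K_\tau$, hence $\mathrm{Re}(P\chi_E(z))\ge c_\tau|E|$ --- but only for $z$ \emph{and} $E$ both inside $K_\tau$. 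The resulting contradiction argument is purely local (it shows $p$ is a.e.\ constant on each $K_\tau$), so the paper must finish by invoking connectedness of $\mathbb{D}$ to glue the local constants together. Your lemma instead freezes the kernel at the centre $w_0$ and controls the perturbation by the Lipschitz estimate (legitimate, since $|1-\overline{w}z|\ge 1-\rho^2>0$ on $K\times K$, so the anti-holomorphic map $w\mapsto(1-\overline{w}z)^{-2}$ has uniformly bounded derivative there), defeating the complex cancellation by shrinking the \emph{disk} containing $E$ rather than the region where $z$ lives. That buys a lower bound $|P\chi_E(z)|\ge c_K|E|$ uniform over all $z$ in a fixed large compact $K$, which is exactly what lets you place $E_1$ near a density point of $\{p\le q_1\}$ and $E_2\subseteq\{p\ge q_2\}$ \emph{far away} in $K$, reaching the global contradiction in one stroke with no connectedness step. (That the paper bounds $\mathrm{Re}(P\chi_E)$ while you bound $|P\chi_E|$ is immaterial, as $|P\chi_E|\ge\mathrm{Re}(P\chi_E)$.) The trade-off: the paper's lemma needs no quantitative estimate and no density points, at the cost of a two-stage (local-then-connectedness) conclusion; your version costs a perturbation estimate and the choice of $r_K$, but is more robust in structure --- the separation of the two test sets is built in rather than recovered by topology. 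Both arguments are complete, and your closing remark is accurate: the paper indeed treats $B_{\mathbb{R}^2_+}$ and $b_{\mathbb{R}^n_+}$ by re-verifying only the kernel lower bound.
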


In order to prove this theorem, 
we apply the following lower pointwise estimate for the Bergman projection.

\begin{lemma}
\label{Lemma-bergman-estimate}
Let $\tau \in {\mathbb D}$.
Then there exists a compact neighborhood $K_{\tau}$ of $\tau$ such that
\[
{\rm Re}(P\chi_{E}(z)) \ge c_\tau|E|
\]
for all measurable sets $E \subset K_\tau$,
where $c_\tau$ is a positive constant depending only on $\tau$.
\end{lemma}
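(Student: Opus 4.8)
The plan is to exploit the explicit form of the Bergman kernel together with a straightforward continuity argument. First I would write the projection of a characteristic function explicitly as
\[
P\chi_E(z) = \int_E \frac{1}{(1-\bar{w}z)^2}\, dA(w),
\]
so that the quantity to be estimated becomes $\mathrm{Re}(P\chi_E(z)) = \int_E \mathrm{Re}\, g(w,z)\, dA(w)$, where I abbreviate $g(w,z) := (1-\bar{w}z)^{-2}$. Everything then reduces to understanding the sign and size of $\mathrm{Re}\, g$ near the diagonal point $(\tau,\tau)$.

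The central observation is that $g$ is continuous on $\mathbb{D}\times\mathbb{D}$: since $|\bar{w}z|<1$ whenever $w,z\in\mathbb{D}$, the denominator $1-\bar{w}z$ never vanishes, so $g$ has no singularity on $\mathbb{D}\times\mathbb{D}$. Moreover, on the diagonal at $\tau$ it takes the value $g(\tau,\tau) = (1-|\tau|^2)^{-2}$, which is a strictly positive real number because $|\tau|<1$. In particular $\mathrm{Re}\, g(\tau,\tau) = (1-|\tau|^2)^{-2} > 0$.

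Next, by continuity of $(w,z)\mapsto \mathrm{Re}\, g(w,z)$ at $(\tau,\tau)$, I would pick $\delta>0$ so small that the closed disk $K_\tau := \{\zeta\in\mathbb{C} : |\zeta-\tau|\le \delta\}$ is contained in $\mathbb{D}$ and that
\[
\mathrm{Re}\, g(w,z) \ge \tfrac{1}{2}(1-|\tau|^2)^{-2} =: c_\tau
\]
holds for all $w,z\in K_\tau$. This $K_\tau$ is the desired compact neighborhood, and $c_\tau$ depends only on $\tau$. Then, for any measurable $E\subset K_\tau$ and any $z\in K_\tau$, integrating the pointwise bound gives
\[
\mathrm{Re}(P\chi_E(z)) = \int_E \mathrm{Re}\, g(w,z)\, dA(w) \ge c_\tau \int_E dA(w) = c_\tau |E|,
\]
where the harmless constant relating the normalized measure $dA$ to Lebesgue measure $|\cdot|$ is absorbed into $c_\tau$.

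There is no serious obstacle here; the argument is essentially a uniform continuity estimate. The only point requiring care is ensuring that, as $w$ and $z$ range over $K_\tau$, the kernel stays uniformly away from its singularity and retains a positive real part bounded below by $c_\tau$ — which is exactly what the continuity of $g$ and the positivity of $\mathrm{Re}\, g(\tau,\tau)$ supply. I would only need to make the choice of $\delta$ quantitative if an explicit value of $c_\tau$ were required.
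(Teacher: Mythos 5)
Your argument is correct and is essentially the paper's own proof: the paper likewise chooses a compact neighborhood $K_\tau$ on which $c_\tau:=\inf_{z,w\in K_\tau}\mathrm{Re}\bigl((1-\bar{w}z)^{-2}\bigr)>0$ and integrates the pointwise bound over $E$. Your version merely makes explicit the continuity-plus-diagonal-positivity justification that the paper leaves implicit, which is a harmless (indeed clarifying) elaboration, not a different route.
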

\begin{proof}
Note that there exists a compact neighborhood $K_\tau$
of $\tau$ such that
\[
c_\tau:=\inf_{z,w \in K_\tau}{\rm Re}\left(\frac{1}{(1-\bar{w}z)^2}\right)>0.
\]
Thus,
\[
{\rm Re}(P\chi_E(z))= \int_{E} {\rm Re}\left(\frac{1}{(1-\bar{w}z)^2}\right) dA(w)
\ge c_\tau\int_{E}dA(w)=c_\tau|E|,
\]
as required.
\end{proof}

Now we prove Theorem \ref{Theorem-main1}. 

\begin{proof}[Proof of Theorem \ref{Theorem-main1}]
Let $\tau \in {\mathbb D}$
and $K_{\tau}$ be the compact neighborhood appearing in Lemma \ref{Lemma-bergman-estimate}.
Assume that 
$p(z)$ does not equal to any constant for almost every $z\in K_\tau$. 
Then we can find subsets $K_\tau^\pm$ of $K_\tau$ such that
\begin{equation}\label{eq:180826-1}
\sup_{z \in K_\tau^-}p(z)<\inf_{z \in K_\tau^+}p(z).
\end{equation}
Using Lemma \ref{Lemma-bergman-estimate} and modular inequality (\ref{eq 180908}), 
we have 
\[
\int_{K_\tau^+}(k c_\tau|K_\tau^-|)^{p(z)}\,dA(z)
\le
\int_{K_\tau^+}|k P\chi_{K_\tau^-}(z)|^{p(z)}\,dA(z)
\le C
\int_{{\mathbb D}}(k \chi_{K_\tau^-})^{p(z)}\,dA(z)
\]
for all $k>0$.
Consequently, if $k c_\tau|K_\tau^-|>1$ and $k>1$,
then we obtain 
\[
|K_\tau^+|(k c_\tau|K_\tau^-|)^{
 \mathop{\mathrm{ess \, inf}}_{z \in K_\tau^+}p(z)}
\le C
|K_\tau^-| k^{
 \mathop{\mathrm{ess \, sup}}_{z \in K_\tau^-}p(z)}.
\]
This contradicts to
(\ref{eq:180826-1}).
Consequently, it follows that
for all $\tau \in {\mathbb D}$ there exists a compact neighborhood
$K_\tau$ such that $p(z)$ equals to a constant for almost every $z\in K_\tau$.
Since ${\mathbb D}$ is connected, it follows that
$p(z)$ equals to a constant for almost every $z\in {\mathbb D}$.
\end{proof}



\section{Bergman projection onto ${\mathbb R}^2_+$}

\if0
We identify ${\mathbb R}^2_+$ with $\{z \in {\mathbb C}\,:\,{\rm Re}(z)>0\}$.
Karapetyants and Samko \cite{KS2017} considered the following operator;
\[
B_{{\mathbb R}^2_+}f(z) = \frac{-1}{\pi} \int_{{\mathbb R}_+^2}\frac{f(w)}{(z-\overline{w})^2} dA(w), \ \ z\in{\mathbb R}_+^2. 
\] 
They proved that $B_{{\mathbb R}_+^2}$ is a projection from $L^{p(\cdot)} ({\mathbb R}_+^2)$ 
onto ${\mathcal A}^{p(\cdot)}({\mathbb R}_+^2)$ 
if $p(\cdot)\in{\mathcal P}({\mathbb R}_+^2)$ satisfies 
the log-condition and decay condition (\cite[Theorem 3.1 (1)]{KS2017}). 
\fi

As the following lemma shows,
$B_{{\mathbb R}^2_+}$ is not degenerate.
\begin{lemma} 
\label{lemma 20191031}
Let $\tau\in{\mathbb R}_+^2$. Then there exists a compact neighborhood $K_{\tau}$ of $\tau$ such that 
\[
{\rm Re} \left( B_{{\mathbb R}_+^2}(\chi_E) (z) \right) \ge C_{\tau} |E| 
\]
for all measurable sets $E \subset K_{\tau}$. 
\end{lemma}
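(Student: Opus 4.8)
The plan is to transcribe the proof of Lemma~\ref{Lemma-bergman-estimate} almost verbatim, the only change being that the disk kernel $\frac{1}{(1-\bar{w}z)^2}$ is replaced by the half-space kernel $-\frac{1}{\pi(z-\bar w)^2}$. The decisive point there was that the kernel has strictly positive real part along the diagonal, and I would begin by verifying the analogous fact here. Writing $\tau=a+ib$ with $b={\rm Im}(\tau)>0$ (since $\tau\in{\mathbb R}_+^2$), on the diagonal one has $\tau-\bar\tau=2ib$, so $(\tau-\bar\tau)^2=-4b^2$ and
\[
{\rm Re}\!\left(-\frac{1}{\pi(\tau-\bar\tau)^2}\right)=\frac{1}{4\pi b^2}>0.
\]

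The next step is a continuity argument. For $z,w\in{\mathbb R}_+^2$ the quantity $z-\bar w$ has imaginary part ${\rm Im}(z)+{\rm Im}(w)>0$ and therefore never vanishes, so the map
\[
(z,w)\longmapsto{\rm Re}\!\left(-\frac{1}{\pi(z-\bar w)^2}\right)
\]
is continuous on ${\mathbb R}_+^2\times{\mathbb R}_+^2$. Since this function is strictly positive at $(\tau,\tau)$, I can choose a compact neighborhood $K_\tau$ of $\tau$ on which it stays positive and set
\[
C_\tau:=\inf_{z,w\in K_\tau}{\rm Re}\!\left(-\frac{1}{\pi(z-\bar w)^2}\right)>0,
\]
exactly mirroring the definition of $c_\tau$ in Lemma~\ref{Lemma-bergman-estimate}.

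Finally, for any measurable $E\subset K_\tau$ and any $z\in K_\tau$, I would move the real part inside the integral and bound the integrand below by $C_\tau$, obtaining
\[
{\rm Re}\bigl(B_{{\mathbb R}_+^2}(\chi_E)(z)\bigr)=\int_E{\rm Re}\!\left(-\frac{1}{\pi(z-\bar w)^2}\right)dA(w)\ge C_\tau|E|,
\]
which is the desired estimate. I do not expect a genuine obstacle. The one real requirement is that $K_\tau$ be taken small enough to preserve positivity of the real part: a short computation shows ${\rm Re}\!\left(-\frac{1}{\pi(z-\bar w)^2}\right)$ is positive precisely when ${\rm Im}(z)+{\rm Im}(w)>|{\rm Re}(z)-{\rm Re}(w)|$, and on a sufficiently small neighborhood of $\tau$ the left-hand side stays near $2b$ while the right-hand side is controlled by the diameter of $K_\tau$. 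The infimum defining $C_\tau$ packages this smallness requirement automatically, so no separate estimate is needed and the argument follows the disk case step for step.
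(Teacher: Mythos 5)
Your proof is correct and takes essentially the same approach as the paper: both reduce the lemma to a uniform positive lower bound for $\mathrm{Re}\bigl(-\tfrac{1}{\pi(z-\bar{w})^2}\bigr)$ over $z,w\in K_\tau$ and then integrate this bound over $E$. The only difference is cosmetic --- the paper exhibits $K_\tau$ as an explicit rectangle and checks the sign of $\mathrm{Re}\,(\bar{z}-w)^2$ by hand, whereas you obtain $K_\tau$ via the same continuity--compactness argument the paper itself uses in the disk case (Lemma \ref{Lemma-bergman-estimate}), and your positivity criterion $\mathrm{Im}(z)+\mathrm{Im}(w)>|\mathrm{Re}(z)-\mathrm{Re}(w)|$ is the correct explicit form of that sign condition.
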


\begin{proof} 

Let $\tau = \alpha+\beta i \in \mathbb{C}\simeq {\mathbb R}_+^2$. 
Firstly, we prove that there exist a compact neighborhood $K_{\tau}$ of $\tau$ such that 
\[
{\rm Re}\left( \frac{1}{(z-\overline{w})^2} \right) <0
\]
holds for any $z, w\in K_{\tau}$. 
To do this, we consider the real part of $(\overline{z}-w)^2$ 
keeping in mind that
\[
{\rm Re}\left( \frac{1}{(z-\overline{w})^2} \right) 
= 
{\rm Re}\left( \frac{ (\overline{z}-w)^2 }{|z-\overline{w}|^4} \right). 
\]
We can take $\gamma >0$
so that 
$\beta-\gamma>0$
because $\beta >0$.
We learn that
\[
K_{\tau} = \{ x+yi\,:\, \alpha-(\beta-\gamma)/2 \le x \le \alpha+(\beta-\gamma)/2, \, \beta-\gamma\le y \le \beta+\gamma\} (\subset {\mathbb R}_+^2)
\]
does the job.
Let $z=a+bi$, $w=c+di \in K_{\tau}$. 
It is easy to see that ${\rm Re} (\overline{z}-w)^2 <0$ 
since
\[
(\overline{z}-w)^2 = (a-c)^2-(b+d)^2-2(a-c)(b+d)i
\]
and $|a-c|\le \beta-\gamma < 2(\beta-\gamma)\le |b+d|$. 

Finally, we have 
\[
{\rm Re}(B_{{\mathbb R}_+^2} (\chi_E(z))) 
= 
\frac{-1}{\pi} \int_E {\rm Re} \left( \frac{1}{(z-\overline{w})^2} \right) dA(w) 
\ge C_{\gamma} \int_E \,dA(w) =c_{\gamma} |E|
\]
for any $E \subset K_{\tau}$. 
\end{proof}

Using 
Lemma \ref{lemma 20191031} and 
an argument similar 
to the proof of Theorem \ref{Theorem-main1}, 
we obtain the following theorem. So we omit the proof. 
\begin{theorem} 
Let $p(\cdot)\in{\mathcal P}({\mathbb R}_+^2)$. 
If the modular inequality 
\[
\int_{ {\mathbb R}_+^2} \left| B_{{\mathbb R_+^2}}f(z) \right|^{p(z)} \,d A(z) 
\le 
C \int_{{\mathbb R}_+^2} |f(z)|^{p(z)}\,dA(z)
\] 
holds for all $f\in L^{p(\cdot)}({\mathbb R}_+^2)$, 
then $p(z)$ equals to a constant for almost every $z\in \mathbb{R}_+^2$. 
\end{theorem}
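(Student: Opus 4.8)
The plan is to mirror the proof of Theorem~\ref{Theorem-main1} almost verbatim, since the final statement is designed to be proved by ``an argument similar to the proof of Theorem \ref{Theorem-main1}.'' The structure is a proof by contradiction that localizes the problem: I would fix an arbitrary point $\tau \in {\mathbb R}_+^2$, invoke Lemma~\ref{lemma 20191031} to obtain the compact neighborhood $K_\tau$ on which ${\rm Re}(B_{{\mathbb R}_+^2}(\chi_E)(z)) \ge C_\tau |E|$ for all measurable $E \subset K_\tau$, and then show that the modular inequality forces $p(\cdot)$ to be (a.e.) constant on $K_\tau$. A connectedness argument at the end upgrades this local conclusion to a global one.

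For the local step I would argue by contradiction: suppose $p(\cdot)$ is not a.e.\ constant on $K_\tau$. Then, by the definition of essential infimum and supremum, I can extract two positive-measure subsets $K_\tau^\pm \subset K_\tau$ separated by a gap,
\[
\sup_{z \in K_\tau^-} p(z) < \inf_{z \in K_\tau^+} p(z).
\]
Testing the modular inequality on $f = k\chi_{K_\tau^-}$ for a parameter $k>0$ and using the lower bound $|B_{{\mathbb R}_+^2}(\chi_{K_\tau^-})(z)| \ge {\rm Re}(B_{{\mathbb R}_+^2}(\chi_{K_\tau^-})(z)) \ge C_\tau |K_\tau^-|$ valid for $z \in K_\tau^+ \subset K_\tau$, the left-hand side is bounded below by integrating $(k C_\tau |K_\tau^-|)^{p(z)}$ over $K_\tau^+$, while the right-hand side equals $C k^{p(z)}$ integrated over $K_\tau^-$. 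Choosing $k$ large enough that $k C_\tau |K_\tau^-| > 1$ and $k > 1$, I can replace the variable exponents by the worst-case constant exponents on each side, obtaining
\[
|K_\tau^+|\,(k C_\tau |K_\tau^-|)^{\inf_{K_\tau^+} p}
\le C\,|K_\tau^-|\,k^{\sup_{K_\tau^-} p}.
\]
Since the exponent on the left strictly exceeds that on the right, letting $k \to \infty$ produces a contradiction because the left side grows faster.

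The one genuine point requiring care—and the step I expect to be the main (if modest) obstacle—is that $B_{{\mathbb R}_+^2}$ is complex-valued, so I must justify the pointwise inequality $|B_{{\mathbb R}_+^2}(\chi_{K_\tau^-})(z)| \ge C_\tau |K_\tau^-|$ from the real-part bound supplied by Lemma~\ref{lemma 20191031}; this is immediate since $|\zeta| \ge |{\rm Re}\,\zeta|$ and, on $K_\tau$, the real part is already bounded below by the positive quantity $C_\tau |K_\tau^-|$. A secondary technical detail is the passage from the gap between the essential infimum and supremum to the existence of the subsets $K_\tau^\pm$ of positive measure; this follows from the definition of $\mathop{\mathrm{ess\,sup}}$ and $\mathop{\mathrm{ess\,inf}}$ over suitable level sets. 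Finally, since ${\mathbb R}_+^2$ is connected and every point has a neighborhood on which $p(\cdot)$ is a.e.\ constant, the locally constant function $p(\cdot)$ must be (a.e.) globally constant, completing the proof.
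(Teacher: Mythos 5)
Your proposal is correct and follows exactly the route the paper intends: the paper omits this proof precisely because it is the argument of Theorem~\ref{Theorem-main1} repeated with Lemma~\ref{lemma 20191031} in place of Lemma~\ref{Lemma-bergman-estimate}, which is what you reconstruct, including the test function $k\chi_{K_\tau^-}$, the worst-case exponent comparison for large $k$, and the connectedness step. Your explicit justification of $|B_{{\mathbb R}_+^2}(\chi_{K_\tau^-})(z)| \ge \mathrm{Re}\,(B_{{\mathbb R}_+^2}(\chi_{K_\tau^-})(z)) \ge C_\tau |K_\tau^-|$ is the same (implicit) step the paper uses in the proof of Theorem~\ref{Theorem-main1}.
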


\section{Harmonic projection in ${\mathbb R}^n_+$}

The same technique can be applied to the harmonic projection over ${\mathbb R}_+^n$. 

\begin{theorem} 
Let $p(\cdot)\in{\mathcal P}({\mathbb R}_+^n)$. 
If the modular inequality 
\[
\int_{ {\mathbb R}_+^n} \left| b_{{\mathbb R_+^n}}f(z) \right|^{p(z)} \,d A(z) 
\le 
C \int_{{\mathbb R}_+^n} |f(z)|^{p(z)}\,dA(z)
\] 
holds for all $f\in L^{p(\cdot)}({\mathbb R}_+^n)$, 
then  $p(z)$ equals to a constant for almost every $z\in \mathbb{R}_+^n$. 
\end{theorem}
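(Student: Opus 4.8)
The plan is to mirror the proof of Theorem \ref{Theorem-main1} essentially verbatim, the only genuinely new ingredient being a non-degeneracy lemma for $b_{{\mathbb R}^n_+}$ completely analogous to Lemma \ref{Lemma-bergman-estimate} and Lemma \ref{lemma 20191031}. Concretely, I would first prove the following: for every $\tau\in{\mathbb R}^n_+$ there exist a compact neighborhood $K_\tau\subset{\mathbb R}^n_+$ of $\tau$ and a constant $c_\tau>0$ such that $b_{{\mathbb R}^n_+}(\chi_E)(x)\ge c_\tau|E|$ for every $x\in K_\tau$ and every measurable set $E\subset K_\tau$. Note that, in contrast with the ${\mathbb R}^2_+$ case, no real part is needed here, since the reproducing kernel $R(x,y)$ is real-valued.

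The key step is to bound $R(x,y)$ from below on a neighborhood of the diagonal point $(\tau,\tau)$. Since $\tau_n>0$, the denominator $|x-\bar y|^{n+2}$ stays bounded away from $0$, so $R$ is continuous on a small box around $(\tau,\tau)$. Evaluating the numerator of $R$ at $x=y=\tau$ gives $|x-\bar y|=2\tau_n$ and, reading the numerator as $n(x_n+y_n)^2-|x-\bar y|^2$, the value $4(n-1)\tau_n^2>0$ for $n\ge 2$; together with the positive normalizing constant $\tfrac{2}{\pi^{n/2}}\Gamma(n/2)$ this makes $R(\tau,\tau)>0$. By continuity I may then choose a compact box
\[
K_\tau=\{x\in{\mathbb R}^n_+:|x_i-\tau_i|\le\delta\ (1\le i\le n)\}
\]
with $\delta<\tau_n$ small enough that $R(x,y)\ge c_\tau>0$ for all $x,y\in K_\tau$; the restriction $\delta<\tau_n$ guarantees $K_\tau\subset{\mathbb R}^n_+$. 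Integrating, $b_{{\mathbb R}^n_+}(\chi_E)(x)=\int_E R(x,y)\,dA(y)\ge c_\tau|E|$, which is the lemma.

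With this lemma in hand the argument is identical to that of Theorem \ref{Theorem-main1}. Fixing $\tau$ and assuming that $p(\cdot)$ is not almost everywhere equal to a constant on $K_\tau$, one finds subsets $K_\tau^\pm\subset K_\tau$ of positive measure with $\sup_{z\in K_\tau^-}p(z)<\inf_{z\in K_\tau^+}p(z)$. Applying the lemma to $E=K_\tau^-$ and then the modular inequality to the function $k\chi_{K_\tau^-}$, and restricting to $k>1$ with $kc_\tau|K_\tau^-|>1$, yields
\[
|K_\tau^+|\,\bigl(kc_\tau|K_\tau^-|\bigr)^{\mathop{\mathrm{ess\, inf}}_{z\in K_\tau^+}p(z)}
\le
C\,|K_\tau^-|\,k^{\mathop{\mathrm{ess\, sup}}_{z\in K_\tau^-}p(z)}.
\]
Letting $k\to\infty$ contradicts the exponent gap, so $p(\cdot)$ is almost everywhere constant on each $K_\tau$; since ${\mathbb R}^n_+$ is connected, $p(\cdot)$ is almost everywhere constant on ${\mathbb R}^n_+$.

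The only real obstacle is the kernel estimate, namely confirming that the numerator of $R$ is strictly positive at the diagonal for every $\tau\in{\mathbb R}^n_+$; this is where one must read the numerator correctly as $n(x_n+y_n)^2-|x-\bar y|^2$, since its value $4(n-1)\tau_n^2$ is positive for all admissible $\tau$ precisely because of the squared term and the hypothesis $n\ge 2$. Once the sign on the diagonal is settled, continuity and the positivity of the normalizing constant do the rest, and the remainder of the proof is a direct transcription of the disk case.
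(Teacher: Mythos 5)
Your proposal is correct and follows essentially the same route as the paper: a strictly positive evaluation of the kernel at the diagonal point $(\tau,\tau)$, extension of the lower bound to a compact neighborhood $K_\tau$ by continuity, and then a verbatim transcription of the proof of Theorem \ref{Theorem-main1} (test function $k\chi_{K_\tau^-}$, $k\to\infty$, connectedness of ${\mathbb R}^n_+$). You even correctly read the numerator of $R(x,y)$ as $n(x_n+y_n)^2-|x-\bar{y}|^2$, which is what the paper's diagonal identity implicitly requires despite the typo in its displayed definition of $b_{{\mathbb R}^n_+}$ (and its constant $\tfrac{n-1}{2^{n+2}}$ should in fact be $\tfrac{n-1}{2^{n}}$, a harmless discrepancy), so your argument matches the paper's in all essentials.
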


\begin{proof}
Let $x=(x',x_n) \in {\mathbb R}^n_+$ be fixed.
Then we have
\[
\frac{n(x_n+z_n)-|x-\bar{z}|^2}{|x-\bar{z}|^{n+2}}
=
\frac{n-1}{2^{n+2}}x_n{}^{-n}
\]
for $z=(z',z_n)=x=(x',x_n)$. Thus 
we obtain
\[
\frac{n(x_n+y_n)-|x-\bar{y}|^2}{|x-\bar{y}|^{n+2}}
>
\frac{n-1}{2^{n+3}}x_n{}^{-n}
\]
as long as $y=(y',y_n)$ belongs to an open neighborhood $U$
of $x$.
Thus, if we go through the same argument as before,
we see that 
$p(z)$ equals to a constant for almost every $z\in \mathbb{R}_+^n$. 
\end{proof}

\section*{Acknowledgements} 
\noindent
Mitsuo Izuki was partially supported by Grand-in-Aid for Scientific Research (C), No.\,15K04928, for Japan Society for the Promotion of Science. 
Takahiro Noi was partially supported by Grand-in-Aid for Young Scientists (B), No.\,17K14207, for Japan Society for the Promotion of Science. 
Yoshihiro Sawano was partially supported by Grand-in-Aid for Scientific Research (C), No.\,19K03546, for Japan Society for the Promotion of Science. 
This work was partly supported by Osaka City University Advanced Mathematical Institute (MEXT Joint Usage/Research Center on Mathematics and Theoretical Physics). 
The authors are thankful to
Professor Alexey Karapetyants 
for letting us know about some literatures
in the theory of Bergman spaces.


Mitsuo Izuki (Corresponding author),\\
Faculty of Liberal Arts and Sciences, \\
Tokyo City University, \\
1-28-1, Tamadutsumi Setagaya-ku Tokyo 158-8557, Japan. \\ 
E-mail: izuki@tcu.ac.jp

\smallskip

Takahiro Noi,\\
Department of Mathematics and Information Science,\\
Tokyo Metropolitan University,\\
Hachioji, 192-0397, Japan.\\
E-mail: taka.noi.hiro@gmail.com

\smallskip

Yoshihiro Sawano,\\
Department of Mathematics and Information Science,\\
Tokyo Metropolitan University,\\
Hachioji, 192-0397, Japan.\\
E-mail: yoshihiro-sawano@celery.ocn.ne.jp
\end{document}